\theoremstyle{plain}
\newtheorem{theorem}{Theorem}[section]
\newtheorem{lemma}[theorem]{Lemma}
\theoremstyle{definition}
\newtheorem{definition}[theorem]{Definition}
\numberwithin{equation}{section}
\newcommand{\R}{{\mathbb R}}
\newcommand{\fp}{{\mathfrak p}}
\newcommand{\ga}{\gamma}
\newcommand{\de}{\delta}
\newcommand{\om}{\omega}
\newcommand{\bS}{\mathbb{S}}
\newcommand{\su}{\subset}
\newcommand{\qu}{\quad}
\newcommand{\D}{\nabla}
\renewcommand{\d}{\mathrm{d}}
\newcommand{\fr}{\frac}
\title[]{The discrete logarithmic Minkowski problem for the electrostatic $\fp$-capacity}
\author{Minhyun Kim}
\address{Fakult\"at f\"ur Mathematik, Universit\"at Bielefeld, 33615 Bielefeld, Germany}
\email{minhyun.kim@uni-bielefeld.de}
\author{Taehun Lee}
\address{School of Mathematics, Korea Institute for Advanced Study, Seoul 02455, Korea}
\email{taehun@kias.re.kr}
\subjclass[2020]{52A20, 31B15, 52B11}
\keywords{logarithmic Minkowski problem, $\fp$-capacity, polytope, convex body}
\begin{document}

\begin{abstract}
The Minkowski problem for electrostatic capacity characterizes measures generated by electrostatic capacity, which is a well-known variant of the Minkowski problem.
This problem has been generalized to $L_p$ Minkowski problem for $\fp$-capacity.
In particular, the logarithmic case $p=0$ relates to cone-volumes and therefore has a geometric significance.
In this paper we solve the discrete logarithmic Minkowski problem for $1<\fp<n$ in the case where the support of the given measure is in general position.
\end{abstract}

\maketitle


\section{Introduction} 


A cornerstone of the Brunn–Minkowski theory is the Minkowski problem which asks if a given measure on the unit sphere $\bS^{n-1}$ arises as the surface area measure of a convex body. 
This problem was completely solved by Minkowski himself \cite{Minkowski97}, Aleksandrov \cite{Aleksandrov38_MS,Aleksandrov39_MS}, and Fenchel–Jessen \cite{FJ38_DVSMFM}. 
Precisely, if a measure $\mu$ on $\bS^{n-1}$ is not concentrated on any closed hemisphere of $\bS^{n-1}$, then $\mu$ is the surface area measure of a convex body if and only if the centroid of $\mu$ is the origin, i.e., $\int_{\bS^{n-1}}\xi \,\d\mu(\xi)=0$. 
The regularity of the solution has been studied by Nirenberg \cite{Nirenberg53_CPAM}, Cheng–Yau \cite{CY76_CPAM}, Pogorelov \cite{Pogorelov78_book}, and Caffarelli \cite{Caffarelli90b}.

An important variant of the Minkowski problem initiated by Jerison \cite{Jerison96} is the Minkowski problem for the electrostatic capacity. 
In a similar way, it asks if a given measure on the unit sphere $\bS^{n-1}$ arises as the \textit{electrostatic capacitary measure}. The electrostatic capacitary measure is defined by the differential of the capacity as the surface area measure is defined by the differential of the volume. 
Jerison found the necessary and sufficient conditions for the existence of a convex body which are surprisingly identical to the corresponding conditions in the classical Minkowski problem (for volume).

A lot of research has been inspired by the work of Jerison. Namely, $L_p$ Minkowski problem for the electrostatic $\fp$-capacity, which will be described below, has been investigated intensively. The aim of this paper is to consider the logarithmic case $p=0$ for discrete measures which relates to the cone-volumes\footnote{For a polytope that has the origin in its interior, the cone-volume of a face of the polytope is the volume of the convex hull of the face and the origin.}.

The result for the discrete Minkowski problem can be understood as prescribing the areas of faces of a polytope. From this perspective, a natural, important variant among other Minkowski type problems is the problem that prescribes the cone-volumes of a polytope instead of the surface areas.  
The discrete Minkowski problem for the cone-volumes can be stated in the following way: 

\,

\textbf{Discrete logarithmic Minkowski problem.}
\textit{Let $u_1,\cdots,u_N \in \bS^{n-1}$ be unit vectors with a set of numbers $\ga_1,\cdots,\ga_N>0$. Find necessary and sufficient conditions on the set of unit vectors and the numbers so that there exists an $N$-faced polytope whose outer unit normals are $u_1,\cdots,u_N$ and the corresponding cone-volumes are $\ga_1,\cdots,\ga_N$.}

\,

B\"or\"oczky, Lutwak, Yang, and Zhang \cite{BLYZ13} solved the problem when the unit vectors are even, i.e., $\{u_1,\cdots ,u_N\} = \{ -u_1,\cdots,-u_N\}$. In \cite{Zhu14}, Zhu proved the problem in the case that the unit vectors are in \textit{general position} (see \Cref{def:gp}) and are not concentrated on any closed hemisphere. Later, the discrete logarithmic Minkowski problem was solved under a more general assumption that contains the two results above as special cases \cite{BHZ16}.

It is worth noting that the logarithmic Minkowski problem for general measures was solved by B\"or\"oczky et al. \cite{BLYZ13} in the case of even measures. For non-even measures, Chen, Li, and Zhu \cite{CLZ19} proved the existence of the solution under the same assumption as in \cite{BLYZ13}. We also note that there is no known conjecture characterizing cone-volume measures.

Both the classical Minkowski problem and the logarithmic Minkowski problem are incorporated in $L_p$ Minkowski problem. The $L_p$ Minkowski problem was initiated by Lutwak in \cite{Lutwak93} for $p \geq 1$ and has been intensively studied in, e.g., \cite{Che06,CW06,HLYZ10,HLYZ05,Jia10,LX13,Lutwak93_JDG,LO95_JDG,LYZ04_TAMS,Stancu02_AM,Zhu15_JFA}.

To describe the $L_p$ Minkowski problem, we recall the notion of the $L_p$ surface area. The surface area measure $S(K,\cdot)$ of $K$ appears in the celebrated Aleksandrov variational formula: for convex bodies $K$ and $L$, 
\begin{align*}
\fr{\d V(K+t L)}{\d t}\Big \vert_{t=0+}=\int_{\bS^{n-1}}h_L(u) \,\d S(K,u),
\end{align*}
where $V$ is the $n$-dimensional volume. For an index $p\in \R$, the $L_p$ surface area measure $S_p(K,\cdot)$ is then defined by
\begin{align*}
S_p(K,\om) = \int_{\om}h_K(u)^{1-p} \,\d S(K,u)
\end{align*}
for any Borel set $\om \su \bS^{n-1}$. 

\,

\textbf{$L_p$ Minkowski problem.}
\textit{Let $p\in \R$ and $\mu$ be a finite Borel measure on the unit sphere $\bS^{n-1}$. Find necessary and sufficient conditions on $\mu$ so that $\mu$ is the $L_p$ surface area measure $S_p(K,\cdot)$ of a convex body $K$.}

\,

The $L_1$ Minkowski problem is the classical Minkowski problem, and the $L_0$ Minkowski problem is the logarithmic Minkowski problem. Another important special case is the $L_{-n}$ Minkowski problem, also known as the centro-affine Minkowski problem. 

\,

Along with the $L_p$ Minkowski problem for volume, there is a parallel $L_p$ Minkowski-type problem.
This problem is concerned with the electrostatic $\fp$-capacity of convex bodies, and is known as the $L_p$ Minkowski problem for $\fp$-capacity. 
To describe the problem, we recall the definition of the electrostatic $\fp$-capacity and its variational formula.

Let $1<\fp<n$. For a compact set $K$ in $\R^n$, the electrostatic $\fp$-capacity is defined as 
\begin{align*}
C_\fp(K) = \inf \left\{ \int_{\R^n} |\D u|^\fp \,\d x: u\in C^\infty_c(\R^n), u\ge \chi_K\right\},
\end{align*}
where $\chi_K$ is the characteristic function of $K$ and $C^\infty_c(\R^n)$ denotes the set of smooth functions with compact supports.
The case $\fp=2$ is the classical electrostatic (or Newtonian) capacity of $K$. 

As in the volume case, the differential of the electrostatic $\fp$-capacity produces a geometric measure. Indeed, the electrostatic $\fp$-capacitary measure $\mu_{\fp}(K,\cdot)$ of $K$ appears in the Hadamard variational formula for $\fp$-capacity \cite{CNS+15}: for convex bodies $K$ and $L$, 
\begin{align*}
\fr{\d C_\fp(K+t L)}{\d t}\bigg \vert_{t=0+}=(\fp-1)\int_{\bS^{n-1}}h_L(u) \,\d \mu_{\fp}(K,u).
\end{align*}
In particular, the formula implies the Poincar\'e $\fp$-capacity formula
\begin{align*}
C_\fp (K) =\fr{\fp-1}{n-\fp} \int_{\bS^{n-1}} h_K(u) \,\d \mu_\fp(K,u).
\end{align*}

In a similar way as in the construction of the $L_p$ surface area measure from the surface area measure, we introduce the $L_p$ electrostatic $\fp$-capacitary measure.

\begin{definition}
Let $p\in \R$ and $1<\fp<n$. Suppose that $K$ is a convex body in $\R^n$ with the origin in its interior, and the measure $\mu_{p,\fp}(K,\cdot)$ is defined by 
\begin{align*}
\mu_{p,\fp}(K,\omega) =\int_\omega h_K(u)^{1-p} \,\d \mu_\fp(K,u)
\end{align*}
for any Borel set $\omega \su \bS^{n-1}$. The measure $\mu_{p,\fp}(K,\cdot)$ is called the $L_p$ electrostatic $\fp$-capacitary measure.
\end{definition}

Note that the $L_p$ electrostatic $\fp$-capacitary measure appears in the $L_p$ variational formula of the electrostatic $\fp$-capacity \cite{ZX20}. Precisely, for convex bodies $K$ and $L$ with the origin in their interiors, it follows that for $p\ge1$
\begin{align*}
\fr{\d C_\fp(K+_pt \cdot_p L)}{\d t}\bigg \vert_{t=0+}=\fr{\fp-1}{p}\int_{\bS^{n-1}}h_L(u)^p \,\d \mu_{p,\fp}(K,u).
\end{align*}

We now state the following $L_p$ Minkowski problem for the electrostatic $\fp$-capacity. 

\,

\textbf{$L_p$ Minkowski problem for $\fp$-capacity.}
\textit{Let $p\in \R$, $1<\fp<n$, and $\mu$ be a finite Borel measure on the unit sphere $\bS^{n-1}$. Find necessary and sufficient conditions on $\mu$ so that $\mu$ is the $L_p$ electrostatic $\fp$-capacitary measure of a convex body $K$.}

\,

As mentioned above, Jerison \cite{Jerison96} initiated the Minkowski type problem for the electrostatic capacity and solved the problem for the classical case $p=1$ and $\fp=2$. The necessary and sufficient conditions for the existence of a convex body are the centroid of $\mu$ on $\mathbb{S}^{n-1}$ is the origin, which is identical to the corresponding conditions in the classical Minkowski problem (for volume). We point out that a solution for general measures was obtained by the existence of a solution for discrete measures and an approximation argument.

Recently, many authors extended Jerison's work to other ranges of $(p,\fp)$. For general measures, the $L_p$ Minkowski problems for $\fp$-capacity were studied by Colesanti, Nystr\"om, Salani, Xiao, and Zhang \cite{CNS+15} for $p=1$ and $1<\fp<n$; by Zou and Xiong \cite{ZX20} for $p>1$ and $1<\fp<n$. We note that the existence results in these paper are also obtained from the existence results to the discrete case. For $0<p<1$ and $1<\fp<2$, the problem for discrete measures was solved by Xiong, Xiong, and Xu \cite{XXX19}. As indicated in \cite{ZX20}, however, the logarithmic case $p=0$ with $1<\fp<n$ is still open although it is an important case.

In this paper we consider the case $p=0$ for discrete measures. 
More precisely, we solve the logarithmic Minkowski problem for $\fp$-capacity in the case of discrete measures whose support is in general position.

\begin{theorem} \label{thm:main}
Let $n \geq 2$, $1<\mathfrak{p}<n$, and $\mu$ be a discrete measure on the unit sphere $\bS^{n-1}$ whose support is not concentrated on a closed hemisphere. Then $\mu$ is the measure $\mu_{0,\fp}(P,\cdot)$ of a polytope $P$ whose outer unit normals are in general position if and only if the support of $\mu$ is in general position. In that case, the polytope $P$ contains the origin in its interior, and its unit normals are exactly the support of $\mu$.
\end{theorem}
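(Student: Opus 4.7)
The strategy is a Lagrange-multiplier variational argument on support numbers, paralleling Zhu's approach \cite{Zhu14} for the classical logarithmic Minkowski problem and its capacitary analogues such as \cite{XXX19,ZX20}, with the volume replaced throughout by $C_\fp$. Write $\mu = \sum_{i=1}^N \ga_i\de_{u_i}$ with $\ga_i>0$ and $\{u_1,\dots,u_N\}$ in general position, and for $h=(h_1,\dots,h_N)\in \R_{>0}^N$ set $P(h) = \bigcap_{i=1}^N\{x\in \R^n : x\cdot u_i\le h_i\}$. I would study the problem of maximizing $\sum_{i=1}^N\ga_i\log h_i$ over $h\in \R_{>0}^N$ subject to $P(h)$ being bounded and $C_\fp(P(h))=1$. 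The necessity direction of the theorem is immediate: if $\mu=\mu_{0,\fp}(P,\cdot)=h_P\,\mu_\fp(P,\cdot)$ and the outer normals of $P$ are in general position, then $\operatorname{supp}\mu$ is contained in those normals and so is in general position, with equality of supports forced by $\ga_i>0$.

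The main obstacle is existence of the maximizer. Along a maximizing sequence $h^{(k)}$, entries cannot drift to $0$ (the objective would fall to $-\infty$), so the critical scenario is some $h_i^{(k)}\to \infty$, causing certain facets to recede to infinity. The analysis of this degeneration is where the general-position hypothesis enters essentially: combined with the non-hemisphere assumption on $\operatorname{supp}\mu$, it produces compactness for any maximizing sequence via a case analysis on how subsets of $\{u_1,\dots,u_N\}$ can bound a polytope. The key input is that general position forces any $n$ of the normals to be linearly independent, which, together with continuity and monotonicity of $C_\fp$ under Hausdorff convergence of convex bodies, rules out sequences in which facets disappear in the limit while both $C_\fp=1$ is preserved and the objective improves. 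This yields a maximizer $\bar h\in \R_{>0}^N$.

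With $\bar h$ in hand, the Euler--Lagrange step is a direct computation. The Hadamard variational formula applied to the deformation $h_i\mapsto h_i+t$ gives
\begin{align*}
\fr{\partial C_\fp(P(h))}{\partial h_i} = (\fp-1)\,\mu_\fp(P(h),\{u_i\}),
\end{align*}
so at $\bar h$ one obtains $\ga_i = \lambda(\fp-1)\,\bar h_i\,\mu_\fp(P(\bar h),\{u_i\}) = \lambda(\fp-1)\,\mu_{0,\fp}(P(\bar h),\{u_i\})$. Summing in $i$ and applying the Poincar\'e $\fp$-capacity formula with $C_\fp(P(\bar h))=1$ determines $\lambda = |\mu|/(n-\fp)>0$. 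A dilation $P(\bar h)\mapsto tP(\bar h)$ rescales $\mu_{0,\fp}$ by $t^{n-\fp}$, so choosing $t=(\lambda(\fp-1))^{1/(n-\fp)}$ converts the proportionality into the desired equality $\mu_{0,\fp}(tP(\bar h),\cdot)=\mu$. Since each $\ga_i>0$ forces $\mu_\fp(P(\bar h),\{u_i\})>0$, every $u_i$ is a genuine facet normal of $tP(\bar h)$; the outer normals of $tP(\bar h)$ are therefore exactly $\{u_1,\dots,u_N\}$ (hence in general position), and the origin lies in its interior because all support numbers of $tP(\bar h)$ are strictly positive.
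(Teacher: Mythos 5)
Your overall architecture (a variational problem with a $C_\fp$ constraint, Euler--Lagrange via the Hadamard formula, the Poincar\'e formula to fix the multiplier, a final dilation) matches the paper's, and your necessity direction and scaling computations are fine. But the extremal problem you pose does not work as stated. First, the supremum of $\sum_i\ga_i\log h_i$ over free $h\in\R_{>0}^N$ with $P(h)$ bounded and $C_\fp(P(h))=1$ is typically $+\infty$: if for some $i_0$ the remaining normals $\{u_j\}_{j\ne i_0}$ are still not concentrated on a closed hemisphere (which general position does not preclude), then once $h_{i_0}$ exceeds $h_{P(h)}(u_{i_0})$ the polytope and its capacity no longer depend on $h_{i_0}$, while the objective tends to $+\infty$ as $h_{i_0}\to\infty$. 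Second, even after repairing this by restricting to support numbers $h_i=h_{P(h)}(u_i)$, the \emph{maximization} direction defeats the step you need most: at a maximizer where $F(P,u_{i_0})$ is a lower-dimensional face, the variable $h_{i_0}$ admits only a one-sided perturbation (decreasing it cuts the polytope, increasing it does nothing), so the identity $\ga_{i_0}=\lambda(\fp-1)h_{i_0}\mu_\fp(P,\{u_{i_0}\})$ is not forced --- and it would in fact be false there, since $\mu_\fp(P,\{u_{i_0}\})=0$ by absolute continuity with respect to the surface area measure. Your closing sentence (``each $\ga_i>0$ forces $\mu_\fp(P(\bar h),\{u_i\})>0$, so every $u_i$ is a facet normal'') assumes the Euler--Lagrange equation holds in all $N$ coordinates, which is exactly what fails at a degenerate face; the argument is circular. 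The paper avoids both problems by \emph{minimizing} the translation-invariant functional $\max_{\xi\in\mathrm{Int}(Q)}\sum_i\ga_i\log(h_Q(u_i)-\xi\cdot u_i)$ over $\mathcal{P}_N(u_1,\dots,u_N)$, and then showing that cutting off a non-facet normal strictly \emph{decreases} this functional after renormalizing the capacity --- the key point being that $\frac{\d}{\d t}C_\fp(P_t)\big|_{t=0}=0$ precisely because $\mu_\fp(P,\{u_{i_0}\})=0$ --- which contradicts minimality and forces all $N$ facets to be present.

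Separately, the compactness step, which you dispatch in one sentence as a ``case analysis'' using ``continuity and monotonicity of $C_\fp$,'' is the technical heart of the matter and is not routine: continuity of $C_\fp$ alone cannot exclude long thin polytopes of capacity one and unbounded diameter. The paper's Theorem 4.1 imports Zhu's diameter bound for volume-normalized polytopes with normals in general position and transfers it to the capacity constraint by combining the classical isoperimetric inequality with the isoperimetric inequality for $\fp$-capacity, $\mathrm{Area}(P)^{1/(n-1)}\lesssim C_\fp(P)^{1/(n-\fp)}$, so that unbounded volume would force unbounded capacity. One must also rule out collapse of the limit polytope onto a lower-dimensional subspace (another use of general position) before any first-variation computation is legitimate. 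As written, your proposal leaves all of these steps unproved, and the two structural choices --- free $h_i$ rather than support numbers, and maximization rather than minimization of a translation-normalized functional --- would have to be changed for the argument to close.
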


The importance of the discrete $L_p$ Minkowski problem for $\fp$-capacity follows from that the problem for general measures or for measures with density can be obtained by an approximation argument with discrete measures. Indeed, by first solving the polytopal case, an approximation argument provides a solution to the problem for general measures when $p\ge1$ and $1<\fp<n$ \cite{Jerison96,CNS+15,ZX20}. See also \cite{CLZ19} for an approximation argument in the logarithmic ($p=0$) Minkowski problem.

The paper organized as follows. In \Cref{sec:pre} we provide some basic notations and facts on convex bodies. In \Cref{sec:ext} we study an extreme problem whose minimizer solves the logarithmic Minkowski problem for $\fp$-capacity. \Cref{sec:cpt} is devoted to obtaining a uniform diameter control for a family of polytopes with the same $\fp$-capacity. Finally, \Cref{thm:main} is proved in \Cref{sec:pf}.


\section{Preliminaries}\label{sec:pre}


Throughout the paper we assume that $n\ge2$ and $1<\fp<n$. In the Euclidean space $\R^n$ a \textit{convex body} is a compact convex set with non-empty interior. We denote by $\mathcal{K}^n$ the set of convex bodies in $\R^n$ and by $\mathcal{K}^n_0$ the set of convex bodies with the origin $o$ in their interior.

We write $x\cdot y$ for the standard inner product of $x,y\in \R^n$ and  $|x|=(x\cdot x)^{1/2}$ for the standard norm of $x\in \R^n$. Let $\bS^{n-1}$ be the boundary of the unit ball $B_1$ in $\R^n$. For a set $A$ in $\R^n$, we write $\mathrm{Int}(A)$ for the interior of $A$. We denote the volume, area, and diameter of $A$ by $V(A)$, $\mathrm{Area}(A)$, and $\mathrm{diam}(A)$, respectively.

The \textit{support function} $h_K$ of $K\in \mathcal{K}^n$ is defined by 
\begin{align*}
h_K(x) = \max \{ x\cdot y: y \in K\}. 
\end{align*}
For $K, L \in \mathcal{K}^n$, the \textit{Hausdorff metric} is defined by
\begin{align*}
\de(K,L)= \max_{u\in \bS^{n-1}} | h_K(u)-h_L(u)|.
\end{align*}
We define the \textit{support hyperplane} $H(K,u)$ for $K\in \mathcal{K}^n$ and $u\in \bS^{n-1}$ as 
\begin{align*}
H(K,u) = \{ x\in \R^n : x\cdot u = h_K(u)\},
\end{align*}
and then the \textit{support set} $F(K,u)$ is defined by
\begin{align*}
F(K,u)=K\cap H(K,u).
\end{align*}

A \textit{polytope} in $\R^n$ is the convex hull of a finite set of points in $\R^n$ with positive $n$-dimensional volume. If the convex hull of a subset of these points lies entirely on the boundary of the polytope and has positive $(n-1)$-dimensional volume, then it is called a \textit{facet} of the polytope.

We write a discrete measure $\mu$ of the form
\begin{align*}
\mu = \sum_{i=1}^N \gamma_i \delta_{u_i},
\end{align*}
where $\ga_1,\cdots,\ga_N$ are positive constants and $\de_u$ denotes the Dirac delta measure on $\bS^{n-1}$ defined by
\begin{align*}
\de_{u}(A)=\begin{cases}
1 \qu \text{if }u\in A,
\\
0 \qu \text{otherwise}.
\end{cases}
\end{align*}
We need some definitions on the support of measures:
\begin{definition}\label{def:gp}
A finite subset $U$ (with no less than $n$ elements) of $\bS^{n-1}$ is said to be \textit{in general position} if any $n$ elements of $U$ are linearly independent. 
\end{definition}

\begin{definition}
A subset $U$ of $\mathbb{S}^{n-1}$ is said to be {\it concentrated on a closed hemisphere} if there exists $u \in \mathbb{S}^{n-1}$ such that $U \subset \lbrace v \in \mathbb{S}^{n-1}: u\cdot v \geq 0 \rbrace$.
\end{definition}

Suppose that the unit vectors $u_1,\cdots,u_N \in \bS^{n-1}$ are not concentrated on any closed hemisphere of $\bS^{n-1}$. We denote by $\mathcal{P}(u_1,\cdots,u_N)$ the set of polytopes of the form $\cap_{i=1}^N \lbrace x \in \mathbb{R}^n: x \cdot u_i \leq a_i \rbrace$ for $a_1,\dots, a_N \in \R$ and by $\mathcal{P}_N(u_1,\cdots,u_N)$ the subset of $\mathcal{P}(u_1,\cdots,u_N)$ whose element has exactly $N$ facets. Note that any $P\in \mathcal{P}(u_1,\cdots,u_N)$ has at most $N$ facets, and the outer normals of $P$ are a subset of $\{u_1,\cdots,u_N\}$.

Note that for convex bodies $K,L\in \mathcal{K}^n$, the (first) mixed $\fp$-capacity $C_\fp(K,L)$ is defined as
\begin{equation*}
C_\fp(K,L) = \fr{1}{n-\fp}\fr{\d C_\fp(K+t L)}{\d t}\bigg \vert_{t=0+} = \frac{\mathfrak{p}-1}{n-\mathfrak{p}} \int_{\omega} h_L(u) \,\d\mu_{\mathfrak{p}}(K, u)
\end{equation*}
and by taking $K=L$, we obtain a measure corresponding to the cone-volume measure, namely for a Borel set $\om\su \bS^{n-1}$,
\begin{equation*}
\frac{\mathfrak{p}-1}{n-\mathfrak{p}} \int_{\omega} h_K(u) \,\d\mu_{\mathfrak{p}}(K, u).
\end{equation*}

Finally, we recall the Blaschke selection theorem (see \cite[Theorem 1.8.7]{Schneider14_book}).

\begin{theorem}[Blaschke selection theorem]
Every bounded sequence of convex bodies has a subsequence that converges to a convex body.
\end{theorem}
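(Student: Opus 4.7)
I would prove this via support functions. The plan is to exploit the paper's own definition of the Hausdorff metric, $\delta(K,L) = \max_{u\in \bS^{n-1}}|h_K(u) - h_L(u)|$, which says that convergence of convex bodies in Hausdorff metric is exactly uniform convergence of their support functions on $\bS^{n-1}$. Thus the problem reduces to extracting a uniformly convergent subsequence of $\{h_{K_j}\}_{j \geq 1}$ on the compact space $\bS^{n-1}$ and then recognizing the uniform limit as the support function of a compact convex set.

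The first step is to verify the hypotheses of Arzel\`a--Ascoli. Boundedness of the sequence supplies $R>0$ with $K_j\subset B_R$ for all $j$, hence $|h_{K_j}(u)|\leq R$ on $\bS^{n-1}$, so the family is equibounded. For equicontinuity I would note that support functions of convex bodies contained in $B_R$ are $R$-Lipschitz on $\bS^{n-1}$, because for $u,v\in \bS^{n-1}$,
\begin{align*}
h_K(u) - h_K(v) = \max_{y\in K} u\cdot y - \max_{y\in K} v\cdot y \leq \max_{y\in K} (u-v)\cdot y \leq R|u-v|,
\end{align*}
and the reverse bound follows by swapping $u$ and $v$. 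Arzel\`a--Ascoli then yields a subsequence $h_{K_{j_k}}$ converging uniformly on $\bS^{n-1}$ to some continuous $h$.

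The second step is to recognize $h$ as a support function. Extending $h$ to $\R^n$ by $h(0)=0$ and $h(x)=|x|h(x/|x|)$ for $x\neq 0$, I would check that positive $1$-homogeneity and sublinearity pass to pointwise (in particular uniform) limits, so the extension inherits both properties from the $h_{K_{j_k}}$. By the classical bijection between sublinear positively $1$-homogeneous functions on $\R^n$ and non-empty compact convex sets, $h$ is the support function of the unique
\begin{align*}
K = \bigcap_{u\in \bS^{n-1}} \{x\in \R^n : x\cdot u \leq h(u)\}.
\end{align*}
Then $\delta(K_{j_k},K) = \max_{u\in \bS^{n-1}} |h_{K_{j_k}}(u)-h(u)| \to 0$, which is the desired Hausdorff convergence.

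The only subtlety I would flag, more a caveat than a true obstacle, is that the paper reserves the term \emph{convex body} for compact convex sets with non-empty interior, while a Hausdorff limit of convex bodies can be lower-dimensional (for instance a sequence of thin slabs collapsing to a segment). One either interprets the statement in the customary broader class of non-empty compact convex sets, or, when the theorem is actually invoked in \Cref{sec:cpt} and \Cref{sec:pf}, supplements it with the uniform diameter/$\fp$-capacity control established there to exclude degeneration and certify that the limit genuinely lies in $\mathcal{K}^n$.
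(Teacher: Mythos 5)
Your proof is correct, but there is nothing in the paper to compare it against: the authors do not prove the Blaschke selection theorem, they simply recall it with a citation to Schneider's book. Your argument is the standard support-function proof --- equiboundedness and the $R$-Lipschitz estimate give Arzel\`a--Ascoli on $\bS^{n-1}$, and the uniform limit is sublinear and positively $1$-homogeneous, hence the support function of a unique non-empty compact convex set --- and every step checks out; it meshes particularly well with this paper because the authors define the Hausdorff metric directly through support functions, so the reduction to uniform convergence of $h_{K_j}$ is immediate. (Schneider's own proof of his Theorem~1.8.7 goes instead through completeness and total boundedness of the space of convex bodies in a fixed ball, via a diagonal selection; your route is shorter given the machinery already on the page.) The caveat you flag is real and worth keeping: with the paper's convention that a convex body has non-empty interior, the limit of a bounded sequence of convex bodies need not be a convex body, so the statement as printed is a slight abuse. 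The paper implicitly acknowledges this where the theorem is actually used in \Cref{sec:pf}: after extracting the limit $P$ the authors separately prove that $P$ has positive $n$-dimensional volume, using that $\{u_1,\dots,u_N\}$ is in general position. So your supplementary remark is exactly the argument the paper supplies at the point of application.
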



\section{An extreme problem}\label{sec:ext}


Let us fix $N \geq n+1$ and let $\lbrace u_1, \dots, u_N \rbrace \subset \mathbb{S}^{n-1}$ be in general position and not concentrated on a closed hemisphere. Let $\gamma_1, \dots, \gamma_N$ be $N$ positive real numbers. For a polytope $P \in \mathcal{P}(u_1, \dots, u_N)$ and the discrete measure $\mu = \sum_{i=1}^N \gamma_i \delta_{u_i}$, we define a functional $\Phi_{\mu, P}: \mathrm{Int}(P) \to \mathbb{R}$ by
\begin{equation*}
\Phi_{\mu,P}(\xi) = \sum_{i=1}^N \gamma_i \log(h_P(u_i) - \xi \cdot u_i).
\end{equation*}
Note that an extreme problem for the functional $\Phi_{\mu, P}$ with volume constraint has been widely used to establish logarithmic Minkowski problem (see, for instance, \cite{BLYZ13,Zhu14,BHZ16,CLZ19}). However, in this paper we consider the extreme problem
\begin{equation} \label{eq:ext_prob}
\inf \left\lbrace \max_{\xi \in \mathrm{Int}(Q)} \Phi_{\mu, Q}(\xi): Q \in \mathcal{P}_N(u_1, \dots, u_{N}) ~\text{and}~ C_{\mathfrak{p}}(Q) = \sum_{i=1}^N \gamma_i \right\rbrace
\end{equation}
with $\mathfrak{p}$-capacity constraint to prove \Cref{thm:main}.

Within this section, we prove that any minimizer of the problem \eqref{eq:ext_prob} solves the discrete logarithmic Minkowski problem for $\mathfrak{p}$-capacity. Before we prove this, let us recall known results about the functional $\Phi_{\mu, P}$ (see \cite[Section 3]{Zhu14}).

\begin{lemma} \label{lem:xi}
Suppose that $\lbrace u_1, \dots, u_N \rbrace \subset \mathbb{S}^{n-1}$ is in general position and not concentrated on a closed hemisphere. Let $\gamma_1,\dots, \gamma_N$ be $N$ positive real numbers and define $\mu = \sum_{i=1}^N \gamma_i \delta_{u_i}$. If $P \in \mathcal{P}(u_1, \dots, u_N)$, then there exists a unique point $\xi(P) \in \mathrm{Int}(P)$ such that
\begin{equation*}
\Phi_{\mu, P}(\xi(P)) = \max_{\xi \in \mathrm{Int}(P)} \Phi_{\mu,P}(\xi).
\end{equation*}
Moreover, if a sequence of polytopes $P_m \in \mathcal{P}(u_1, \dots, u_N)$ converges to $P$ with respect to the Hausdorff metric, then
\begin{equation*}
\lim_{i\to\infty} \xi(P_i) = \xi(P)
\end{equation*}
and
\begin{equation*}
\lim_{i\to\infty} \Phi_{\mu,P_i}(\xi(P_i)) = \Phi_{\mu, P}(\xi(P)).
\end{equation*}
\end{lemma}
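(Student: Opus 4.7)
The plan is to exploit strict concavity of $\Phi_{\mu,P}$ for existence and uniqueness, and then use a compactness argument driven by boundary blow-up for the continuity assertions.

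For existence and uniqueness, observe that each summand $\gamma_i \log(h_P(u_i) - \xi \cdot u_i)$ is the composition of the concave function $\log$ with a positive affine function of $\xi$ (positive precisely on $\mathrm{Int}(P)$), hence is concave on $\mathrm{Int}(P)$. A direct computation gives Hessian $-\sum_i \gamma_i u_i u_i^T / (h_P(u_i) - \xi \cdot u_i)^2$, which is negative definite whenever $u_1, \dots, u_N$ span $\R^n$. Since the $u_i$ are not concentrated on any closed hemisphere they in fact positively span $\R^n$, so $\Phi_{\mu,P}$ is strictly concave on $\mathrm{Int}(P)$. As $\xi$ approaches $\partial P$, at least one factor $h_P(u_i) - \xi \cdot u_i$ tends to $0$, forcing $\Phi_{\mu,P}(\xi) \to -\infty$; combined with continuity on the interior this guarantees the supremum is attained, and strict concavity yields uniqueness of the maximizer $\xi(P)$.

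For the continuity claim, fix a reference point $\xi_0 \in \mathrm{Int}(P)$. Since $P_m \to P$ in the Hausdorff metric, $h_{P_m} \to h_P$ uniformly on $\bS^{n-1}$ and $\xi_0 \in \mathrm{Int}(P_m)$ for all sufficiently large $m$. By the maximizing property,
\begin{equation*}
\Phi_{\mu, P_m}(\xi(P_m)) \;\geq\; \Phi_{\mu, P_m}(\xi_0) \;\longrightarrow\; \Phi_{\mu, P}(\xi_0) \;>\; -\infty.
\end{equation*}
Also $\{P_m\}$ is uniformly bounded, so $\{\xi(P_m)\}$ is bounded. I claim that every subsequential limit $\xi^\ast$ of $\xi(P_m)$ lies in $\mathrm{Int}(P)$. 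If instead $\xi^\ast \in \partial P$, then $h_P(u_i) - \xi^\ast \cdot u_i = 0$ for some $i$, so the corresponding term of $\Phi_{\mu, P_m}(\xi(P_m))$ tends to $-\infty$ while the remaining terms stay bounded above (their arguments are controlled by $\mathrm{diam}(P_m) + |\xi(P_m)|$, uniformly in $m$), contradicting the lower bound displayed above.

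Having ruled out escape to the boundary, pick any convergent subsequence $\xi(P_{m_k}) \to \xi^\ast \in \mathrm{Int}(P)$. For any $\eta \in \mathrm{Int}(P)$, one has $\eta \in \mathrm{Int}(P_{m_k})$ for large $k$, so $\Phi_{\mu, P_{m_k}}(\xi(P_{m_k})) \geq \Phi_{\mu, P_{m_k}}(\eta)$; passing to the limit using uniform convergence of support functions yields $\Phi_{\mu,P}(\xi^\ast) \geq \Phi_{\mu,P}(\eta)$. Hence $\xi^\ast = \xi(P)$ by uniqueness, and the full sequence $\xi(P_m)$ converges to $\xi(P)$. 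Convergence of the maximum values follows from joint continuity of $(\xi, h) \mapsto \sum_i \gamma_i \log(h(u_i) - \xi \cdot u_i)$ in $(\xi, h)$ on the open region where $h(u_i) - \xi \cdot u_i > 0$ for all $i$. The main obstacle is the boundary-escape step: a priori the maximizers of $\Phi_{\mu, P_m}$ could drift toward $\partial P$, and ruling this out requires the global lower bound obtained by comparing with the fixed interior point $\xi_0$.
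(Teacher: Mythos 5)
Your proof is correct. The paper does not prove this lemma itself---it is quoted as a known result from \cite[Section 3]{Zhu14}---and your argument (strict concavity of $\Phi_{\mu,P}$ via the negative definite Hessian $-\sum_i \gamma_i u_iu_i^T/(h_P(u_i)-\xi\cdot u_i)^2$, blow-up of $\Phi_{\mu,P}$ at $\partial P$ for existence, and comparison with a fixed interior point $\xi_0$ to rule out boundary escape of the maximizers under Hausdorff convergence) is essentially the standard proof given there.
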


Let us now prove that a minimizer of the extreme problem \eqref{eq:ext_prob} solves the discrete logarithmic Minkowski problem for $\mathfrak{p}$-capacity. 

\begin{theorem} \label{thm:min_prob}
Let $u_1, \dots, u_N$ and $\gamma, \dots, \gamma_N$ be given as in \Cref{lem:xi}. If $P \in \mathcal{P}_N(u_1, \dots, u_N)$ satisfies $\xi(P)=o$, $C_{\mathfrak{p}}(P)=\sum_{i=1}^N \gamma_i$, and
\begin{equation} \label{eq:min_prob}
\Phi_{\mu,P}(o) = \inf \left\lbrace \max_{\xi \in \mathrm{Int}(Q)} \Phi_{\mu, Q}(\xi): Q \in \mathcal{P}_N(u_1, \dots, u_{N}) ~\text{and}~ C_{\mathfrak{p}}(Q) = \sum_{i=1}^N \gamma_i \right\rbrace,
\end{equation}
then $\mu_{0,\mathfrak{p}}(P, \cdot) = \mu$.
\end{theorem}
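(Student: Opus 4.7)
My plan is a Lagrange-multiplier argument at the critical point $P$, combining an envelope-theorem computation for $\Phi_{\mu,Q}(\xi(Q))$ with the Hadamard variational formula for $C_\fp$. For any $(b_1,\ldots,b_N)\in\R^N$ I would consider the perturbation
\begin{equation*}
P_t=\bigcap_{i=1}^N\{x\in\R^n:\, x\cdot u_i\le h_P(u_i)+tb_i\}.
\end{equation*}
Because $P\in\mathcal{P}_N(u_1,\ldots,u_N)$ and the $u_i$'s are in general position, the combinatorial type of $P$ is stable under small perturbations of its support numbers, so for $|t|$ small $P_t\in\mathcal{P}_N(u_1,\ldots,u_N)$ with $h_{P_t}(u_i)=h_P(u_i)+tb_i$.

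To carry out the variational step, the Hessian of $\Phi_{\mu,P}(\xi)$ at its interior maximum is negative definite (the general-position hypothesis implies $\{u_i\}$ spans $\R^n$), so the implicit function theorem applied to $\nabla_\xi\Phi_{\mu,P_t}(\xi)=0$ promotes the continuity in \Cref{lem:xi} to differentiability of $t\mapsto\xi(P_t)$ at $0$. Together with $\xi(P)=o$ the envelope theorem then gives
\begin{equation*}
\left.\fr{\d}{\d t}\right|_{t=0}\Phi_{\mu,P_t}(\xi(P_t))=\sum_{i=1}^{N}\fr{\gamma_ib_i}{h_P(u_i)},
\end{equation*}
while the Hadamard variational formula (applied first to $b_i\ge 0$ via a Minkowski-sum representation of $P_t$, and extended to arbitrary signs by linearity of the support-number derivative of $C_\fp$) yields
\begin{equation*}
\left.\fr{\d}{\d t}\right|_{t=0}C_\fp(P_t)=(\fp-1)\sum_{i=1}^{N}b_i\,\mu_\fp(P,\{u_i\}).
\end{equation*}

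Because $P$ is an extremizer under the constraint $C_\fp(Q)=\sum\gamma_i$, every direction with $\sum b_i\mu_\fp(P,\{u_i\})=0$ must also annihilate $\sum\gamma_ib_i/h_P(u_i)$. The Lagrange multiplier theorem therefore produces $\lambda\in\R$ with $\gamma_i/h_P(u_i)=\lambda(\fp-1)\mu_\fp(P,\{u_i\})$ for every $i$. Multiplying by $h_P(u_i)$, summing, and combining with the constraint $C_\fp(P)=\sum\gamma_i$ and the Poincar\'e formula $C_\fp(P)=\fr{\fp-1}{n-\fp}\sum_ih_P(u_i)\mu_\fp(P,\{u_i\})$ pins down $\lambda$; reading off each coordinate then gives the pointwise identity $\gamma_i=h_P(u_i)\mu_\fp(P,\{u_i\})$, i.e., $\mu_{0,\fp}(P,\cdot)=\mu$. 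The step I expect to be most delicate is promoting the continuity of $t\mapsto \xi(P_t)$ supplied by \Cref{lem:xi} to differentiability at $0$, since the implicit-function argument rests on non-degeneracy of the Hessian of $\Phi_{\mu,P}$ and thereby on the general-position assumption; once this is in hand, the Hadamard formula and the Lagrange bookkeeping that identifies $\lambda$ are routine.
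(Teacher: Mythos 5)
Your proposal is correct and follows essentially the same route as the paper: the paper also perturbs the support numbers, uses the Hadamard variational formula from \cite{CNS+15}, and invokes the implicit function theorem (via positive definiteness of $\sum_i \gamma_i h_P(u_i)^{-2}u_iu_i^T$, which is exactly your Hessian nondegeneracy point) to differentiate $t\mapsto\xi(P_t)$; the only cosmetic difference is that the paper enforces the capacity constraint concretely by rescaling $\tilde P_t=C_{\mathfrak{p}}(P_t)^{-1/(n-\mathfrak{p})}P_t$ rather than quoting the abstract Lagrange multiplier theorem. One small bookkeeping correction: with $\lambda$ pinned down by the Poincar\'e formula the identity you obtain is $\gamma_i=\frac{\mathfrak{p}-1}{n-\mathfrak{p}}h_P(u_i)\mu_{\mathfrak{p}}(P,\{u_i\})$, not $\gamma_i=h_P(u_i)\mu_{\mathfrak{p}}(P,\{u_i\})$, so (as in the paper's own final step) the constant $\frac{\mathfrak{p}-1}{n-\mathfrak{p}}$ must be absorbed by replacing $P$ with a suitable dilate.
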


\begin{proof}
Since $\xi(\lambda P) = \lambda \xi(P)$, we may assume that $C_{\mathfrak{p}}(P) = \sum_{i=1}^N \gamma_i = 1$. Let $\delta_1,\dots, \delta_N \in \mathbb{R}$ and define
\begin{equation*}
P_t := \bigcap_{i=1}^N \lbrace x \in \mathbb{R}^n : x\cdot u_i \leq h_P(u_i) + t\delta_i \rbrace.
\end{equation*}
Since $P \in \mathcal{P}_N(u_1,\dots,u_N)$, we choose $|t|$ sufficiently small so that $P_t$ is the polytope with exactly $N$ facets. Let
\begin{equation*}
\alpha(t) = C_{\mathfrak{p}}(P_t)^{-1/(n-\mathfrak{p})}
\end{equation*}
and define $\tilde{P}_t = \alpha(t) P_t$. Then, $\tilde{P} \in \mathcal{P}_N(u_1, \dots, u_N)$, $C_{\mathfrak{p}}(\tilde{P}_t) = 1$, and $\tilde{P}_t \to P$ with respect to the Hausdorff metric as $t \to 0$. Since $P$ is an optimizer of the extreme problem \eqref{eq:min_prob}, we will compute the first variation of the functional.

We first have from \cite[Theorem 5.2]{CNS+15} and
\begin{equation} \label{eq:d_h}
\left.\frac{\d}{\d t} \right|_{t=0} h_{P_t}(u_i) = \delta_i
\end{equation}
that
\begin{equation} \label{eq:d_alpha}
\begin{split}
\alpha'(0)
&= -\frac{1}{n-\mathfrak{p}} \left. \frac{\d C_{\mathfrak{p}}(P_t)}{\d t} \right|_{t=0} \\
&= -\frac{\mathfrak{p}-1}{n-\mathfrak{p}} \int_{\mathbb{S}^{n-1}} \left. \frac{\d}{\d t} \right|_{t=0} h_{P_t}(\xi) \,\d\mu_{\mathfrak{p}}(P,\xi) \\
&= -\frac{\mathfrak{p}-1}{n-\mathfrak{p}} \sum_{i=1}^N \delta_i \mu_{\mathfrak{p}}(P, \lbrace u_i\rbrace).
\end{split}
\end{equation}
We set $\xi(t) := \xi(\tilde{P}_t)$ and define
\begin{equation*}
\Phi(t):= \Phi_{\mu, \tilde{P}_t}(\xi(t)) = \max_{\xi \in \mathrm{Int}(\tilde{P}_t)} \Phi_{\mu, \tilde{P}_t}(\xi) = \sum_{i=1}^N \gamma_i \log (\alpha(t)h_{P_t}(u_i) - \xi(t) \cdot u_i).
\end{equation*}
Since the maximum is attained in an interior point $\xi(t)$, we obtain
\begin{equation} \label{eq:xi}
0 = \sum_{i=1}^N \gamma_i \frac{u_{i,j}}{\alpha(t) h_{P_t}(u_i) - \xi(t) \cdot u_i}
\end{equation}
for each $j=1,\dots, n$, where $u_{i, j}$ is the $j$-th element of the vector $u_i$. In particular, when $t=0$ we have
\begin{equation} \label{eq:o}
\sum_{i=1}^N \gamma_i \frac{u_i}{h_{P}(u_i)} = o
\end{equation}
as vectors.

Let us next show that $\Phi$ is differentiable at zero. We define avector valued function $F$ by
\begin{equation*}
F(t, \xi) = \sum_{i=1}^N \gamma_i \frac{u_i}{\alpha(t) h_{P_t}(u_i) - \xi \cdot u_i}.
\end{equation*}
Then, the Jacobian matrix of $F$ at $(0,0)$ is given by
\begin{equation*}
\frac{\partial F}{\partial \xi}(0,0) = \sum_{i=1}^N \frac{\gamma_i}{h_{P_t}^2(u_i)} u_i u_i^T.
\end{equation*}
Let $x \in \mathbb{R}^n$ be a nonzero vector, then by the assumption that $\lbrace u_1, \dots, u_N \rbrace$ is in general position, there exists a vector $u_{i_0} \in \lbrace u_1,\dots, u_N \rbrace$ such that $u_{i_0} \cdot x \neq 0$. Thus, we have
\begin{equation*}
x^T \left( \sum_{i=1}^N \frac{\gamma_i}{h_{P_t}^2(u_i)} u_i u_i^T \right) x = \sum_{i=1}^N \frac{\gamma_i}{h_{P_t}^2(u_i)} (x \cdot u_i)^2 \geq \frac{\gamma_{i_0}}{h_{P_t}^2(u_{i_0})} (x \cdot u_{i_0})^2 > 0,
\end{equation*}
which indicates that the Jacobian matrix $\frac{\partial F}{\partial \xi}(0,0)$ is positive definite. Therefore, by $\xi(0)=o$, \eqref{eq:xi}, and the implicit function theorem, $\xi(t)$ is differentiable at zero and so is $\Phi(t)$.

From the assumption that $\tilde{P}_0 = P$ is a minimizer of \eqref{eq:min_prob}, we have $\Phi(0) \leq \Phi(t)$ for sufficiently small $|t|$, which implies that $\Phi'(0) = 0$. Thus, by using \eqref{eq:d_h}, \eqref{eq:d_alpha}, and \eqref{eq:o}, we obtain
\begin{equation*}
\begin{split}
0 = \Phi'(0) 
&= \sum_{i=1}^N \gamma_i \left( \frac{\alpha(0)}{h_P(u_i)} \left.\frac{\d}{\d t} \right|_{t=0} h_{P_t}(u_i) + \alpha'(0) - \frac{\xi'(0) \cdot u_i}{h_P(u_i)} \right) \\
&= \sum_{i=1}^N \gamma_i \left( \frac{\delta_i}{h_P(u_i)} - \frac{\mathfrak{p}-1}{n-\mathfrak{p}} \sum_{j=1}^N \delta_j \mu_{\mathfrak{p}}(P, \lbrace u_j \rbrace) \right) \\
&= \sum_{i=1}^N \delta_i \left( \frac{\gamma_i}{h_P(u_i)} - \frac{\mathfrak{p}-1}{n-\mathfrak{p}} \left( \sum_{j=1}^N \gamma_j \right) \mu_{\mathfrak{p}}(P, \lbrace u_i \rbrace) \right).
\end{split}
\end{equation*}
Since we assumed that $\sum_{j=1}^N \gamma_j = 1$, we arrive at
\begin{equation*}
\sum_{i=1}^N \delta_i \left( \frac{\gamma_i}{h_P(u_i)} - \frac{\mathfrak{p}-1}{n-\mathfrak{p}} \mu_{\mathfrak{p}}(P, \lbrace u_i \rbrace) \right) = 0.
\end{equation*}
Recalling that we have chosen $\delta_i$ arbitrary, we conclude that
\begin{equation*}
\frac{\mathfrak{p}-1}{n-\mathfrak{p}} h_{P}(u_i) \mu_{\mathfrak{p}}(P, \lbrace u_i \rbrace) = \gamma_i
\end{equation*}
for all $i=1, \dots, n$. Therefore, $\lambda P$ satisfies $\mu_{0,\mathfrak{p}}(\lambda P, \cdot) = \mu$, where $\lambda = (\frac{\fp-1}{n-\fp})^{-1/(n-\fp)}$.
\end{proof}


\section{Compactness}\label{sec:cpt}


In the previous section, we proved that any minimizer of the extreme problem \eqref{eq:ext_prob} solves the discrete logarithmic Minkowski problem for $\mathfrak{p}$-capacity. In order to prove \Cref{thm:main}, we need to show that there exists a minimizer of the extreme problem \eqref{eq:ext_prob}. To this end, we provide a compactness result for this problem in this section. That is to say, we prove that a sequence of polytopes from $\mathcal{P}(u_1, \dots, u_N)$ having a bounded $\mathfrak{p}$-capacity also has a bounded diameter.

\begin{theorem} \label{thm:boundedness}
Suppose that $\lbrace u_1, \dots, u_N \rbrace \subset \mathbb{S}^{n-1}$ is in general position and not concentrated on a closed hemisphere. If $\lbrace P_m \rbrace \subset \mathcal{P}(u_1, \dots, u_N)$ is a sequence of polytopes satisfying $o \in P_m$ and $C_{\mathfrak{p}}(P_m) = 1$, then $\lbrace P_m \rbrace$ is bounded.
\end{theorem}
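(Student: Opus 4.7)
The plan is proof by contradiction via rescaling, Blaschke selection, and a dimensional descent using the general position assumption. Suppose $\{P_m\}$ is unbounded. Passing to a subsequence, set $R_m := \max_i h_{P_m}(u_i) \to \infty$ and $Q_m := R_m^{-1} P_m$; then $Q_m \in \mathcal{P}(u_1,\dots,u_N)$, $o \in Q_m$, $\max_i h_{Q_m}(u_i) = 1$, and by the homogeneity $C_\fp(\lambda K) = \lambda^{n-\fp} C_\fp(K)$ we get $C_\fp(Q_m) = R_m^{-(n-\fp)} \to 0$. Since $\{u_i\}$ is not concentrated on any closed hemisphere, $\bigcap_{i=1}^N\{x:x\cdot u_i\le 1\}$ is bounded, uniformly bounding $\mathrm{diam}(Q_m)$. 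Blaschke selection then produces a Hausdorff limit $Q^*$ with $o \in Q^*$ and $\max_i h_{Q^*}(u_i) = 1$. By (lower semi-)continuity of $\fp$-capacity under Hausdorff convergence, $C_\fp(Q^*)=0$, which for $1 < \fp < n$ forces $\dim Q^* \le n-2$.

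Let $V$ be the affine hull of $Q^*$---a linear subspace of dimension $k\le n-2$, since $o\in Q^*$---and $I := \{i : h_{Q^*}(u_i)=0\}$. An LP-duality argument applied to $Q^* = \bigcap_i\{x : x\cdot u_i \le h_{Q^*}(u_i)\}$ shows: for every $w \in V^\perp$, both $w$ and $-w$ are non-negative combinations of $\{u_i\}$ whose coefficients are supported on $I$, since the dual minimum $\min\{\sum\lambda_i h_{Q^*}(u_i):\lambda_i\ge 0,\ \sum\lambda_i u_i=w\}$ equals $h_{Q^*}(w)=0$ and each summand is non-negative. Hence $V^\perp \subset \mathrm{posconv}\{u_i : i \in I\}$. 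If $\{u_i:i\in I\}$ were not in any closed hemisphere, then $\bigcap_{i\in I}\{x\cdot u_i \le 0\}=\{o\}$ would force $Q^*=\{o\}$, contradicting $\max_i h_{Q^*}(u_i)=1$; so there exists $e\in\bS^{n-1}$ with $u_i\cdot e\ge 0$ on $I$, and since $V^\perp$ is a subspace contained in $\{v\cdot e\ge 0\}$, $e\in V$. Refining the LP via $\sum\lambda_i(u_i\cdot e)=0$ then yields $V^\perp\subset\mathrm{posconv}\{u_i:i\in I\cap e^\perp\}$.

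Iterating this descent with successive axes $e_1,\dots,e_{k-1}$---each chosen as a hemisphere axis for the current restricted normal set, which exists in $V$ by the same subspace-in-half-space argument and in the orthogonal complement of the previously chosen axes by general position dimension counting (at stage $j$, the residual set has at most $n-j$ elements in a subspace of dimension $n-j$, hence cannot positively span it)---produces after $k$ steps the inclusion $V^\perp \subset \mathrm{posconv}\{u_i : u_i \in V^\perp\}$. Now general position forbids more than $\dim V^\perp = n-k$ of the $u_i$'s from lying in $V^\perp$ (any $n-k+1$ of them there would be linearly dependent, contradicting the linear independence of any $n$ of the $u_i$'s), yet positively spanning $V^\perp$ requires at least $n-k+1$ vectors; this is the desired contradiction. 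The principal obstacle is rigorously establishing the vanishing of $\fp$-capacity at the possibly lower-dimensional Hausdorff limit $Q^*$, together with cleanly implementing the iterative descent and tracking the relevant dimensions.
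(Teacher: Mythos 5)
Your proposal is correct in outline, but it proves the theorem by a genuinely different route from the paper. The paper's proof is a three-line reduction: if $\{P_m\}$ were unbounded then (by Zhu's Theorem 4.3, the volume-normalized analogue) $\{V(P_m)\}$ would be unbounded, and the chain $V(P_m)^{1/n}\lesssim \mathrm{Area}(P_m)^{1/(n-1)}\lesssim C_{\mathfrak{p}}(P_m)^{1/(n-\mathfrak{p})}$ — the classical isoperimetric inequality followed by Xiao's capacitary isoperimetric inequality — would force $C_{\mathfrak{p}}(P_m)$ to be unbounded, contradicting $C_{\mathfrak{p}}(P_m)=1$. You instead give a self-contained blow-up argument (rescale by $R_m=\max_i h_{P_m}(u_i)$, extract a degenerate Hausdorff limit $Q^*$, and derive a contradiction with general position via LP duality and a dimensional descent), which in effect re-proves a capacitary version of Zhu's compactness theorem from scratch. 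The trade-off: the paper's argument is short but leans on two external results; yours needs only the homogeneity $C_{\mathfrak{p}}(\lambda K)=\lambda^{n-\mathfrak{p}}C_{\mathfrak{p}}(K)$ and positivity of capacity on bodies with interior, at the cost of carrying out the descent. I checked the descent: the identity $Q^*=\bigcap_i\{x\cdot u_i\le h_{Q^*}(u_i)\}$ does hold here (because each $Q_m$ is exactly such an intersection and $o\in Q_m$), the LP-duality step is sound, and the counting closes — at stage $j\ge 2$ the residual normals lie in a subspace of dimension $n-j+1$ (you wrote $n-j$; harmless off-by-one), so general position caps their number at the dimension and the cone they generate sits in a closed half-space of that subspace, producing the next axis $e_j\in V$.

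One remark on what you call the principal obstacle: you do not need $C_{\mathfrak{p}}(Q^*)=0$ or $\dim Q^*\le n-2$, so the continuity of $\mathfrak{p}$-capacity at degenerate limits can be bypassed entirely. It suffices that $\mathrm{Int}(Q^*)=\emptyset$, which follows from monotonicity alone: if $B_r(x_0)\subset Q^*$ then $B_{r/2}(x_0)\subset Q_m$ for large $m$, whence $C_{\mathfrak{p}}(Q_m)\ge (r/2)^{n-\mathfrak{p}}C_{\mathfrak{p}}(B_1)>0$, contradicting $C_{\mathfrak{p}}(Q_m)\to 0$. The final count already yields a contradiction for every $k=\dim Q^*$ with $1\le k\le n-1$ (at most $n-k$ of the $u_i$ can lie in the $(n-k)$-dimensional space $V^\perp$, while positively spanning it needs $n-k+1$), and $k\ge 1$ is guaranteed by $\max_i h_{Q^*}(u_i)=1$. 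With that adjustment your argument is complete.
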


A statement similar to \Cref{thm:boundedness} with the assumption $C_{\mathfrak{p}}(P_m)=1$ replaced by $V(P_m)=1$ was first proved by Zhu in \cite{Zhu14}. The idea of proof of \Cref{thm:boundedness} is to combine Zhu's result and the isoperimetric inequality for $\mathfrak{p}$-capacity. Let us first recall the result in \cite{Zhu14}.

\begin{theorem} \cite[Theorem 4.3]{Zhu14} \label{thm:boundedness_volume}
Let $\lbrace u_1, \dots, u_N \rbrace \subset \mathbb{S}^{n-1}$ be given as in \Cref{thm:boundedness}. If $\lbrace P_m \rbrace \subset \mathcal{P}(u_1, \dots, u_N)$ is a sequence of polytopes satisfying $o \in P_m$ and $V(P_m) = 1$, then $\lbrace P_m \rbrace$ is bounded.
\end{theorem}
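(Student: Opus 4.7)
The plan is to argue by contradiction: suppose, after passing to a subsequence, that $\mathrm{diam}(P_m) \to \infty$. Since $o \in P_m$ implies $h_{P_m}(u_i) \geq 0$, and $P_m \subset \bigcap_{i=1}^N \{x : x \cdot u_i \leq h_{P_m}(u_i)\}$, and because the latter intersection is bounded for any fixed right-hand sides (as $\{u_1,\dots,u_N\}$ not being concentrated on a closed hemisphere positively spans $\R^n$), the only way for $\mathrm{diam}(P_m) \to \infty$ is $\max_i h_{P_m}(u_i) \to \infty$. After a further subsequence, partition $\{1,\dots,N\} = I \sqcup J$ so that $h_{P_m}(u_i) \to \infty$ for $i \in I$ (with $I$ nonempty) and $h_{P_m}(u_j) \to h_j^\ast \in [0,\infty)$ for $j \in J$.

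The main identity to exploit is the volume formula for a polytope,
\begin{equation*}
n V(P_m) = \sum_{k=1}^N h_{P_m}(u_k)\, \mathrm{Area}(F_m(u_k)),
\end{equation*}
where $F_m(u_k)$ denotes the facet of $P_m$ with outer unit normal $u_k$ (area zero if absent). Together with $V(P_m) = 1$ and $h_{P_m}(u_i) \to \infty$ for $i \in I$, this forces $\mathrm{Area}(F_m(u_i)) \to 0$ for each $i \in I$. The Minkowski closure identity $\sum_{k=1}^N \mathrm{Area}(F_m(u_k)) u_k = o$ then yields $\sum_{j \in J} \mathrm{Area}(F_m(u_j)) u_j \to o$.

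The argument now splits into two cases. If $\{u_j : j \in J\}$ is not concentrated on any closed hemisphere, then $Q_m := \bigcap_{j \in J}\{x : x \cdot u_j \leq h_{P_m}(u_j)\}$ is a bounded polytope whose half-space data converges, so $Q_m$ converges to a bounded limit; since $P_m \subset Q_m$, this contradicts $\mathrm{diam}(P_m) \to \infty$. Otherwise there exists $w \in \bS^{n-1}$ with $u_j \cdot w \geq 0$ for every $j \in J$. To handle the possibility that individual areas $\mathrm{Area}(F_m(u_j))$ diverge (which could occur when $h_j^\ast = 0$), normalize by $A_m := \max_k \mathrm{Area}(F_m(u_k))$ and pass to a subsequence so that $\mathrm{Area}(F_m(u_k))/A_m \to a_k \in [0,1]$ with $\max_k a_k = 1$. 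The classical isoperimetric inequality yields $A_m \geq c_n / N > 0$ uniformly in $m$, hence $a_i = 0$ for $i \in I$ and the limit $\sum_{j \in J} a_j u_j = o$ holds with at least one $a_j$ equal to $1$. Taking the inner product with $w$ in this identity gives $\sum_{j \in J} a_j (u_j \cdot w) = 0$; since every summand is nonnegative, $a_j > 0$ forces $u_j \cdot w = 0$, i.e., $\{u_j : a_j > 0\} \subset w^\perp$. The general position assumption bounds the number of $u_k$ lying in any linear hyperplane by $n-1$, so $|\{j : a_j > 0\}| \leq n-1$; the same assumption makes any such $\leq n-1$ vectors linearly independent, contradicting the nontrivial relation $\sum_{j : a_j > 0} a_j u_j = o$.

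The principal obstacle is the hemisphere case, and within it the degenerate scenario $h_j^\ast = 0$ in which individual facet areas might diverge and break a naive passage to the limit in the Minkowski closure identity. The normalization by $A_m$, together with the uniform lower bound on $A_m$ supplied by the isoperimetric inequality, is designed precisely to sidestep this issue and produce a clean limiting linear relation to which general position can be applied.
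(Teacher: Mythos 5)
The paper does not prove this statement; it is imported verbatim as \cite[Theorem 4.3]{Zhu14}, so there is no internal proof to compare against. Judged on its own, your argument is correct and self-contained. Every step checks out: $o\in P_m$ gives $h_{P_m}(u_i)\ge 0$, and since $\{u_1,\dots,u_N\}$ positively spans $\R^n$ (not concentrated on a closed hemisphere), unbounded diameter forces some support numbers to blow up; the identity $nV(P_m)=\sum_k h_{P_m}(u_k)\,\mathrm{Area}(F(P_m,u_k))$ with $V(P_m)=1$ and nonnegative terms kills the corresponding facet areas; and the Minkowski relation $\sum_k \mathrm{Area}(F(P_m,u_k))u_k=o$ then localizes the problem to the indices $J$ with bounded support numbers. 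Your dichotomy is the right one, and the delicate point --- that facet areas over $J$ need not converge --- is handled cleanly by normalizing with $A_m=\max_k\mathrm{Area}(F(P_m,u_k))$ and using the isoperimetric inequality to bound $A_m$ below by $c_n/N$, so that the limiting relation $\sum_{j\in J}a_ju_j=o$ is nontrivial. The endgame is exactly where both hypotheses earn their keep: the hemisphere direction $w$ forces $\{u_j: a_j>0\}\subset w^{\perp}$, and general position caps that set at $n-1$ linearly independent vectors, contradicting the nontrivial positive relation. (The degenerate case $J=\emptyset$ is also covered, since then all $a_k=0$ while $\max_k a_k=1$.) Two cosmetic remarks: in the first case you only need \emph{uniform boundedness} of $Q_m$, not convergence, which follows directly from the boundedness of the $h_{P_m}(u_j)$, $j\in J$; and it is worth stating explicitly that a subset of at most $n-1$ vectors from a set in general position is linearly independent because it extends to a set of $n$ such vectors. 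Compared with Zhu's original argument, which extracts a stretching direction from points realizing the diameter, your facet-area/Minkowski-relation route is arguably more streamlined and makes transparent exactly where ``general position'' and ``not concentrated on a closed hemisphere'' are each used.
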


We need the isoperimetric inequality for $\mathfrak{p}$-capacity for the proof of \Cref{thm:boundedness}.

\begin{theorem} \cite[Theorem 2.1]{Xia17} \label{thm:isoperimetric_ineq}
Let $\mathfrak{p} \in (1,n)$ and $P$ be a convex body. Then,
\begin{equation*}
\left( \frac{\mathrm{Area}(P)}{\mathrm{Area}(B_1)} \right)^{\frac{1}{n-1}} \leq \left( \frac{\mathfrak{p}(n-1)}{n(\mathfrak{p}-1)} \right)^{\frac{\mathfrak{p}-1}{n-\mathfrak{p}}} \left( \frac{C_{\mathfrak{p}}(P)}{C_{\mathfrak{p}}(B_1)} \right)^{\frac{1}{n-\mathfrak{p}}},
\end{equation*}
where $B_1$ is the unit ball in $\mathbb{R}^n$.
\end{theorem}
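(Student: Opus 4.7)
The plan is to exploit the level-set structure of the $\fp$-capacitary potential of $P$, combining a coarea-and-H\"older analysis with the classical Euclidean isoperimetric inequality applied to the (necessarily convex) level sets. Let $u\colon\R^n\to[0,1]$ denote the $\fp$-capacitary potential of $P$, i.e., the $\fp$-harmonic function on $\R^n\setminus P$ with boundary value $1$ on $\partial P$ that decays to $0$ at infinity, so that $\int_{\R^n\setminus P} |\nabla u|^\fp\,\d x = C_\fp(P)$. By a classical result of Lewis, the superlevel sets $\Omega_t = \{u\ge t\}$ are convex for every $t\in(0,1]$, with $\Omega_1 = P$. Write $A(t) = \mathcal{H}^{n-1}(\partial \Omega_t)$ and $V(t) = \mathrm{Vol}(\Omega_t)$, so that $A(1) = \mathrm{Area}(P)$.

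Two ingredients drive the argument. First, because $|\nabla u|^{\fp-2}\nabla u$ is divergence-free on $\R^n\setminus P$, the divergence theorem implies that $t\mapsto\int_{\{u=t\}} |\nabla u|^{\fp-1}\,\d\mathcal{H}^{n-1}$ is constant, and the coarea identity $C_\fp(P) = \int_0^1\int_{\{u=t\}}|\nabla u|^{\fp-1}\,\d\mathcal{H}^{n-1}\,\d t$ identifies this common value as $C_\fp(P)$. H\"older's inequality on $\{u=t\}$ with exponents $\fp$ and $\fp/(\fp-1)$, applied to the factorization $1 = |\nabla u|^{-(\fp-1)/\fp}\cdot|\nabla u|^{(\fp-1)/\fp}$, then yields
\begin{equation*}
A(t)^\fp \;\le\; C_\fp(P)\cdot(-V'(t))^{\fp-1},
\end{equation*}
where $-V'(t) = \int_{\{u=t\}} |\nabla u|^{-1}\,\d\mathcal{H}^{n-1}$ by coarea. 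Second, since $\Omega_t$ is convex, the classical Euclidean isoperimetric inequality gives $A(t) \ge n\omega_n^{1/n} V(t)^{(n-1)/n}$, where $\omega_n = V(B_1)$.

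Combining the two inequalities produces the differential inequality
\begin{equation*}
-V'(t) \;\ge\; \bigl(n\omega_n^{1/n}\bigr)^{\fp/(\fp-1)} C_\fp(P)^{-1/(\fp-1)} V(t)^{\beta},\qquad \beta := \frac{\fp(n-1)}{n(\fp-1)} > 1.
\end{equation*}
Integrating from $t\to 0^+$ (where $V(t)\to\infty$, and hence $V(t)^{1-\beta}\to 0$) up to $t=1$ yields a volume-capacity bound of the form $V(P)^{(n-\fp)/n}\le\text{const}\cdot C_\fp(P)$, equivalent to the P\'olya--Szeg\H{o} inequality for $\fp$-capacity.

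The main obstacle is converting this into the surface-capacity inequality in the theorem, as the naive combination with the classical isoperimetric inequality at $t=1$ goes in the wrong direction. I expect this to be resolved by evaluating the H\"older estimate at $t=1$, which gives $\mathrm{Area}(P)^\fp\le C_\fp(P)\cdot(-V'(1))^{\fp-1}$, and then bounding $-V'(1)$ from above through a sharp comparison with the equality case (balls). Alternatively, one may invoke a Sobolev trace inequality with critical exponent $q = \fp(n-1)/(n-\fp)$ applied to the potential $u$: since $u\equiv 1$ on $\partial P$, this directly reads $\mathrm{Area}(P)^{1/q}\le S\cdot C_\fp(P)^{1/\fp}$. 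The stated constant $\bigl(\frac{\fp(n-1)}{n(\fp-1)}\bigr)^{(\fp-1)/(n-\fp)} = \beta^{(\fp-1)/(n-\fp)}$ is precisely the ODE exponent $\beta$ raised to the scaling power $(\fp-1)/(n-\fp)$ that links $C_\fp$ to $\mathrm{Area}$.
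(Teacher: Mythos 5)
First, a point of reference: the paper does not prove this statement at all --- it is quoted verbatim from Xiao \cite{Xia17}, so there is no in-paper argument to compare against; I am comparing your attempt with the known proof of that cited result, which does run along the level-set lines you set up.

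Your machinery is the right one (Lewis's convexity of the superlevel sets $\Omega_t=\{u\ge t\}$, constancy of the flux $\int_{\{u=t\}}|\D u|^{\fp-1}\,\d\mathcal{H}^{n-1}=C_\fp(P)$, the coarea formula, H\"older on each level set, and the Euclidean isoperimetric inequality), and your intermediate conclusions --- the differential inequality for $V(t)$ and the resulting P\'olya--Szeg\H{o}-type bound $V(P)^{(n-\fp)/n}\lesssim C_\fp(P)$ --- are correct. But the proof stops exactly where the real content of the theorem begins, and neither of your two proposed completions works as stated. For (a): bounding $-V'(1)=\int_{\partial P}|\D u|^{-1}\,\d\mathcal{H}^{n-1}$ from above requires a quantitative lower bound on $|\D u|$ along $\partial P$, which is not available with a universal constant (and ``sharp comparison with the equality case (balls)'' is not an argument here --- the inequality is not even attained by balls: taking $P=B_1$ gives $1\le\beta^{(\fp-1)/(n-\fp)}$ with strict inequality, so there is no equality case to compare with). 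For (b): a trace-Sobolev inequality on the exterior domain $\R^n\setminus P$ with a domain-independent sharp constant equal to the one in the statement is not an off-the-shelf fact; proving such an inequality for all convex bodies is essentially equivalent to proving the theorem.

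The missing idea is elementary but decisive: \emph{monotonicity of surface area under inclusion of convex bodies}. Since $\Omega_t\supseteq P$ for $t\in(0,1]$ and both are convex, $A(t)=\mathcal{H}^{n-1}(\partial\Omega_t)\ge\mathrm{Area}(P)$ for every $t$; this is the only place where $\mathrm{Area}(P)$, rather than $V(P)$, can enter. Concretely: rewrite your H\"older bound as $-\d t/\d v\le C_\fp(P)^{1/(\fp-1)}a(v)^{-\fp/(\fp-1)}$ in the volume variable $v=V(t)$ and integrate over $t\in(0,1)$ to get $C_\fp(P)\ge\bigl(\int_{V(P)}^{\infty}a(v)^{-\fp/(\fp-1)}\,\d v\bigr)^{-(\fp-1)}$, where $a(v)$ is the surface area of the level set of volume $v$. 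Then use \emph{both} lower bounds $a(v)\ge\max\bigl\{\mathrm{Area}(P),\,n\omega_n^{1/n}v^{(n-1)/n}\bigr\}$ (with $\omega_n=V(B_1)$), extend the integral down to $v=0$, and split it at the crossover volume $v_0=\bigl(\mathrm{Area}(P)/(n\omega_n^{1/n})\bigr)^{n/(n-1)}$. The two pieces sum to $\tfrac{\beta}{\beta-1}\,v_0\,\mathrm{Area}(P)^{-\fp/(\fp-1)}$ with your $\beta=\fp(n-1)/(n(\fp-1))$, and unwinding yields $C_\fp(P)\ge\bigl(\tfrac{\beta-1}{\beta}\bigr)^{\fp-1}(n\omega_n^{1/n})^{n(\fp-1)/(n-1)}\mathrm{Area}(P)^{(n-\fp)/(n-1)}$, which after inserting $C_\fp(B_1)=\bigl(\tfrac{n-\fp}{\fp-1}\bigr)^{\fp-1}n\omega_n$ is exactly the stated inequality with the constant $\beta^{(\fp-1)/(n-\fp)}$. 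Without this step your write-up establishes only the volume--capacity inequality, not the area--capacity inequality claimed.
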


We are now in a position to prove \Cref{thm:boundedness} by using \Cref{thm:boundedness_volume} and \Cref{thm:isoperimetric_ineq}.

\begin{proof} [Proof of \Cref{thm:boundedness}]
Suppose that $\lbrace P_m \rbrace$ is unbounded. Then, by \Cref{thm:boundedness_volume}, a sequence of volumes $\lbrace V(P_m) \rbrace$ is unbounded. By the isoperimetric inequality (for volume), there exists a dimensional constant $c_1 = c_1(n) > 0$ such that
\begin{equation*}
V(P_m)^{\frac{1}{n}} \leq c_1 \mathrm{Area}(P_m)^{\frac{1}{n-1}}
\end{equation*}
holds for every $m$. Moreover, by \Cref{thm:isoperimetric_ineq}, we have
\begin{equation*}
\mathrm{Area}(P_m)^{\frac{1}{n-1}} \leq c_2 C_{\mathfrak{p}}(P_m)^{\frac{1}{n-\mathfrak{p}}}
\end{equation*}
for some $c_2 = c_2(n, \mathfrak{p}) > 0$. Thus, we conclude that $\lbrace C_{\mathfrak{p}}(P_m) \rbrace$ is unbounded, which leads us to a contradiction. Therefore, $\lbrace P_m \rbrace$ is bounded.
\end{proof}


\section{Discrete logarithmic Minkowski problem for \texorpdfstring{$\mathfrak{p}$}{p}-capacity}\label{sec:pf}


In this section, we prove the main result, \Cref{thm:main}, which follows from the existence of a minimizer of the extreme problem \eqref{eq:min_prob}.

\begin{theorem}
Suppose that $\lbrace u_1, \dots, u_N \rbrace \subset \mathbb{S}^{n-1}$ is in general position and not concentrated on a closed hemisphere. Let $\gamma_1,\dots, \gamma_N$ be $N$ positive real numbers and define $\mu = \sum_{i=1}^N \gamma_i \delta_{u_i}$. Then, there exists a polytope $P \in \mathcal{P}_N(u_1, \dots, u_N)$ such that $\xi(P) = o$, $C_{\mathfrak{p}}(P) = \sum_{i=1}^N \gamma_i$, and
\begin{equation*}
\Phi_{\mu,P}(o) = \inf \left\lbrace \max_{\xi \in \mathrm{Int}(Q)} \Phi_{\mu, Q}(\xi): Q \in \mathcal{P}_N(u_1, \dots, u_{N}) ~\text{and}~ C_{\mathfrak{p}}(Q) = \sum_{i=1}^N \gamma_i \right\rbrace.
\end{equation*}
In particular, the measure $\mu$ is $L_0$ electrostatic $\fp$-capacitary measure, i.e., $\mu_{0,\mathfrak{p}}(P, \cdot) = \mu$.
\end{theorem}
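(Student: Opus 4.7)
My approach is the direct method. Set $G := \sum_{i=1}^N \gamma_i$ and fix a minimizing sequence $\{P_m\} \subset \mathcal{P}_N(u_1,\ldots,u_N)$ with $C_\fp(P_m) = G$ and $\Phi_{\mu,P_m}(\xi(P_m)) \to I := \inf$. Both the $\fp$-capacity and the value $\max_\xi \Phi_{\mu,Q}(\xi) = \Phi_{\mu,Q}(\xi(Q))$ are invariant under the translation $Q \mapsto Q - v$ (which carries $\xi(Q)$ to $\xi(Q) - v$), so replacing each $P_m$ by $P_m - \xi(P_m)$ I may assume $\xi(P_m) = o \in \mathrm{Int}(P_m)$.

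By \Cref{thm:boundedness} the sequence is uniformly bounded, and Blaschke selection produces a subsequence converging in Hausdorff metric to a compact convex set $P = \bigcap_{i=1}^N \{x : x \cdot u_i \le h_P(u_i)\} \in \mathcal{P}(u_1,\ldots,u_N)$. A fixed test polytope in $\mathcal{P}_N$ scaled to $\fp$-capacity $G$ shows $I < \infty$. Combined with the uniform bound $h_{P_m}(u_i) \le \mathrm{diam}(P_m) \le R$, the inequality $\Phi_{\mu,P_m}(o) = \sum_i \gamma_i \log h_{P_m}(u_i) \ge I - 1$ (for $m$ large) forces $h_{P_m}(u_i) \ge c > 0$ uniformly in $m$, and hence $h_P(u_i) \ge c > 0$ for every $i$. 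Since $o \cdot u_i = 0 < h_P(u_i)$, the origin lies in $\mathrm{Int}(P)$, so $P$ is a convex body; continuity of $C_\fp$ gives $C_\fp(P) = G$, and \Cref{lem:xi} yields $\xi(P) = o$ together with $\Phi_{\mu,P}(o) = I$.

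The final step is to upgrade $P \in \mathcal{P}(u_1,\ldots,u_N)$ to $P \in \mathcal{P}_N(u_1,\ldots,u_N)$, i.e., to rule out that some $u_{i_0}$ is redundant in the defining intersection of $P$. I argue by contradiction. Assuming $u_{i_0}$ is redundant, decrease $h_P(u_{i_0})$ by $t > 0$ past the redundancy threshold $t^\ast$ to obtain a strictly smaller polytope $P_t$; then rescale by $\alpha(t) := (G/C_\fp(P_t))^{1/(n-\fp)}$ so that $\alpha(t) P_t \in \mathcal{P}_N$ with $C_\fp(\alpha(t) P_t) = G$. A first-variation computation modeled on the proof of \Cref{thm:min_prob}, using the Hadamard formula for $\fp$-capacity together with the identity $\sum_i \gamma_i u_i / h_P(u_i) = 0$ from $\xi(P) = o$, yields a strictly negative derivative of $\Phi_{\mu,\alpha(t)P_t}(\xi(\alpha(t)P_t))$ as $t$ moves past $t^\ast$, contradicting $\Phi_{\mu,P}(o) = I$. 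Therefore $P \in \mathcal{P}_N$, and \Cref{thm:min_prob} immediately gives $\mu_{0,\fp}(P, \cdot) = \mu$.

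The main obstacle is this last step. The set $\mathcal{P}_N$ is not closed in the Hausdorff topology---a sequence in $\mathcal{P}_N$ can converge to a polytope with strictly fewer facets---so the naive compactness argument produces only a limit in the larger class $\mathcal{P}$. The perturbation-and-rescaling sketched above is the standard remedy, but the computation is delicate: one has to track the capacity-preserving scale factor $\alpha(t)$ and the motion of the inner point $\xi(\alpha(t)P_t)$ through the combinatorial transition at $t^\ast$, and extract from the first variation the strict sign that contradicts optimality.
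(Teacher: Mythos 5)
Your overall architecture (minimizing sequence, translation so that $\xi(P_m)=o$, \Cref{thm:boundedness} plus Blaschke selection, then a cut-and-rescale perturbation to force all $N$ facets, then \Cref{thm:min_prob}) is the same as the paper's. But there is a genuine gap at the non-degeneracy step. You deduce $h_{P_m}(u_i)\geq c>0$ from the inequality $\Phi_{\mu,P_m}(o)\geq I-1$, which presupposes $I>-\infty$; your test-polytope argument only gives $I<\infty$. Finiteness of the infimum from below is not automatic: a sequence of competitors can degenerate to a lower-dimensional convex set while keeping $C_\fp=1$ and bounded diameter (for $1<\fp<n$ the $\fp$-capacity of an $(n-1)$-dimensional convex set is positive), and along such a sequence $\max_\xi\Phi_{\mu,Q}(\xi)\to-\infty$. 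Indeed, without the general position hypothesis (e.g.\ with a pair of antipodal normals) the infimum genuinely is $-\infty$ and no minimizer exists. So your argument is circular precisely where the general position hypothesis must do its work. The paper avoids this by a separate geometric argument: if the Blaschke limit $P$ were contained in a subspace $L$ of codimension $m>0$, then $m+1$ of the normals would have to lie in $L^\perp$, contradicting general position; full-dimensionality of $P$ then yields $o=\xi(P)\in\mathrm{Int}(P)$ and, via the continuity statement in \Cref{lem:xi}, the finiteness of $I$ as a byproduct. You need either that argument or an independent a priori lower bound on $\max_\xi\Phi_{\mu,Q}(\xi)$ over the constraint set.

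The facet-counting step is also only sketched, and the sketch misses the mechanism that produces the strict sign. First, there is no positive ``redundancy threshold $t^\ast$'': $h_P(u_{i_0})$ is the support function, so the hyperplane $H(P,u_{i_0})$ already touches $P$, and any cut $x\cdot u_{i_0}\leq h_P(u_{i_0})-t$ with $t>0$ creates the missing facet. Second, the decisive point is that $\mu_\fp(P,\lbrace u_{i_0}\rbrace)=0$ when $F(P,u_{i_0})$ is not a facet (absolute continuity of $\mu_\fp(P,\cdot)$ with respect to $S(P,\cdot)$), so by the Hadamard formula $\frac{\d}{\d t}C_\fp(P_t)\vert_{t=0}=0$: the capacity-restoring factor $\alpha(t)$ is $1+o(t)$, while the term $\gamma_{i_0}\log\bigl(h_P(u_{i_0})-\xi\cdot u_{i_0}-t\bigr)$ decreases at first order in $t$. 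That comparison of orders, not the identity $\sum_i\gamma_i u_i/h_P(u_i)=0$ (which the paper uses only in the variational computation of \Cref{thm:min_prob}), is what gives $\Phi_{\mu,\lambda P_t}(\xi(\lambda P_t))<\Phi_{\mu,P}(o)$ and the contradiction. Without identifying this, the ``strictly negative derivative'' you invoke is unsubstantiated.
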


\begin{proof}
It is enough to prove the theorem for the case $\sum_{i=1}^N \gamma_i = 1$. Let us take a minimizing sequence $\lbrace P_m \rbrace \subset \mathcal{P}_N(u_1, \dots, u_N)$ for
\begin{equation} \label{eq:inf}
\inf \left\lbrace \max_{\xi \in \mathrm{Int}(Q)} \Phi_{\mu, Q}(\xi): Q \in \mathcal{P}_N(u_1, \dots, u_{N}) ~\text{and}~ C_{\mathfrak{p}}(Q) = 1 \right\rbrace.
\end{equation}
Moreover, we may assume that $\xi(P_m) = o$. Indeed, otherwise we may take $P_m - \xi(P_m)$ instead of $P_m$, which also minimizes \eqref{eq:inf} since $\Phi_{\mu, P_m}(\xi(P_m)) = \Phi_{\mu, P_m-\xi(P_m)}(\xi(P_m - \xi(P_m)))$.

We know from \Cref{thm:boundedness} that $\lbrace P_m \rbrace$ is bounded. Thus, by Blaschke selection theorem, \Cref{lem:xi}, and continuity of $C_{\mathfrak{p}}$ on $\mathcal{K}^n$, there exists a subsequence of $\lbrace P_m \rbrace$ that converges to a convex set $P$ satisfying $\xi(P) = o$, $C_{\mathfrak{p}}(P) = 1$, and
\begin{equation*}
\Phi_{\mu,P}(o) = \inf \left\lbrace \max_{\xi \in \mathrm{Int}(Q)} \Phi_{\mu, Q}(\xi): Q \in \mathcal{P}_N(u_1, \dots, u_{N}) ~\text{and}~ C_{\mathfrak{p}}(Q) = 1 \right\rbrace.
\end{equation*}
Furthermore, it follows from the fact $P_m \in \mathcal{P}_N(u_1, \dots, u_N)$ that $\mathrm{supp}(P) \subset \lbrace u_1, \dots, u_N \rbrace$.

We now prove that $P$ has a positive $n$-dimensional volume.
Otherwise $P \subset L$ for some linear subspace $L$ in $\mathbb{R}^n$ with $\mathrm{dim}(L) = n-m$, $0 < m \leq n$. Then, there exist $m+1$ vectors $u_{i_1}, \dots, u_{i_{m+1}} \in \lbrace u_1, \dots, u_N \rbrace$ such that $u_{i_j} \in L^\perp$ for $j=1, \dots, m+1$. Since $\mathrm{dim}(L^\perp) = m$, any set of $n$ vectors containing $u_{i_1}, \dots, u_{i_{m+1}}$ is linearly dependent, which contradicts to the assumption that $\lbrace u_1,\dots,u_N \rbrace$ is in general position. Therefore, we have $\mathrm{Int}(P) \neq \emptyset$.

To show that $P \in \mathcal{P}_N(u_1, \dots, u_N)$, we need to show that $P$ has exactly $N$ facets.
We first argue by contradiction that $F(P, u_i)$ are facets for all $i =1, \dots, N$. Assume that there is an index $i_0 \in \lbrace 1, \dots, N \rbrace$ such that $F(P, u_{i_0})$ is not a facet.
For $t > 0$, we define the polytope $P_t$ by
\begin{equation*}
P_t = P \cap \lbrace x \in \mathbb{R}^n : x \cdot u_{i_0} \leq h_P(u_{i_0}) - t \rbrace
\end{equation*}
and let $\lambda = C_{\mathfrak{p}}(P_t)^{-1/(n-\mathfrak{p})}$. Then, we have $C_{\mathfrak{p}}(\lambda P_t) = 1$ and $\lambda P_t \to P$ as $t \searrow 0$. Moreover, it follows from \Cref{lem:xi} that $\xi(P_t) \to \xi(P) = o \in \mathrm{Int}(P)$ as $t \searrow 0$. We now choose $t > 0$ sufficiently small so that $P_t$ has exactly one more facet than that of $P$ and $h_P(u_i) > \xi(P_t) \cdot u_i + t$ for all $i=1,\dots, N$.

We show that $\Phi_{\mu, \lambda P_t}(\xi(\lambda P_t)) < \Phi_{\mu, P}(o)$, which contradicts the fact that $\Phi_{\mu, P}(o)$ is a minimum. Let $d_0 = \mathrm{diam}(P)$, then $d_0 > h_P(u_{i_0}) - \xi(P_t) \cdot u_{i_0} > 0$. Since $h_{P_t}(u_i) = h_{P}(u_i)$ for all $i \neq i_0$, we have
\begin{equation*}
\begin{split}
&\prod_{i=1}^N (h_{\lambda P_t}(u_i) - \xi(\lambda P_t) \cdot u_i)^{\gamma_i} \\
&= \lambda \prod_{i=1}^N (h_{P_t}(u_i) - \xi(P_t) \cdot u_i)^{\gamma_i} \\
&= \lambda \left( \prod_{i=1}^N (h_P(u_i) - \xi(P_t)\cdot u_i)^{\gamma_i} \right) \left( \frac{h_P(u_{i_0}) - \xi(P_t) \cdot u_{i_0} - t}{h_P(u_{i_0}) - \xi(P_t) \cdot u_{i_0}} \right)^{\gamma_{i_0}} \\
&\leq \left( \prod_{i=1}^N (h_P(u_i) - \xi(P_t)\cdot u_i)^{\gamma_i} \right) \frac{(1-t/d_0)^{\gamma_{i_0}}}{C_{\mathfrak{p}}(P_t)^{1/(n-\mathfrak{p})}}.
\end{split}
\end{equation*}
If we define a function 
\begin{equation*}
g(t) = C_{\mathfrak{p}}(P_t)^{\frac{1}{(n-\mathfrak{p})\gamma_{i_0}}} + \frac{1}{d_0} t -1,
\end{equation*}
then $g(0) = 0$ and
\begin{equation*}
g'(t) = \frac{1}{(n-\mathfrak{p})\gamma_{i_0}} C_{\mathfrak{p}}(P_t)^{\frac{1}{(n-\mathfrak{p})\gamma_{i_0}}-1} \frac{\d}{\d t} C_{\mathfrak{p}}(P_t) + \frac{1}{d_0}.
\end{equation*}
By the variational formula for $\mathfrak{p}$-capacity \cite[Theorem 5.2]{CNS+15}, we have
\begin{equation*}
\left. \frac{\d C_{\mathfrak{p}}(P_t)}{\d t} \right|_{t=0} = (\mathfrak{p}-1) \sum_{i=1}^N \left. \frac{\d}{\d t} \right|_{t=0} h_{P_t}(u_i) \mu_{\mathfrak{p}}(P, \lbrace u_{i} \rbrace).
\end{equation*}
Since $\left. \frac{\d}{\d t} \right|_{i=0} h_{P_t}(u_{i}) = 0$ for $i \neq i_0$, $\left. \frac{\d}{\d t} \right|_{i=0} h_{P_t}(u_{i_0}) = -1$, and $\mu_{\mathfrak{p}}(P, \lbrace u_{i_0} \rbrace) = 0$, we obtain that 
\begin{equation*}
\left. \frac{\d C_{\mathfrak{p}}(P_t)}{\d t} \right|_{t=0} = 0.
\end{equation*}
Here we have used the fact that $\mu_\fp(P,\cdot)$ is absolutely continuous with respect to $S(P,\cdot)$. 
Thus, $g(t) > 0$ for sufficiently small $t > 0$, which yields that
\begin{equation*}
\frac{(1-t/d_0)^{\gamma_{i_0}}}{C_{\mathfrak{p}}(P_t)^{1/(n-\mathfrak{p})}} < 1.
\end{equation*}
Therefore, we arrive at
\begin{equation*}
\prod_{i=1}^N (h_{\lambda P_{t}}(u_i) - \xi(\lambda P_{t}) \cdot u_i)^{\gamma_i} < \prod_{i=1}^N (h_P(u_i) - \xi(P_{t}) \cdot u_i)^{\gamma_i},
\end{equation*}
which implies $\Phi_{\mu, \lambda P_t}(\xi(\lambda P_t)) < \Phi_{\mu, P}(\xi(P_t)) \le \Phi_{\mu, P}(o)$. We now conclude that $P$ has exactly $N$ facets.
\end{proof}


\section*{Acknowledgement}


We want to thank Kyeongsu Choi for his interest in our work and valuable comments. Minhyun Kim gratefully acknowledges financial support by the German Research Foundation (GRK 2235 - 282638148). Taehun Lee was supported by a KIAS Individual Grant (MG079501) at Korea Institute for Advanced Study


\begin{thebibliography}{10}

\bibitem{Aleksandrov38_MS}
A.~D. Aleksandrov.
\newblock On the theory of mixed volumes of convex bodies {III}: Extension of
  two theorems of {M}inkowski on convex polyhedra to arbitrary convex bodies.
\newblock {\em Mat. Sb. (N.S.)}, 3(1):27--46, 1938.

\bibitem{Aleksandrov39_MS}
A.~D. Aleksandrov.
\newblock On the surface area function of a convex body.
\newblock {\em Mat. Sb. (N.S.)}, 6:167--174, 1939.

\bibitem{BHZ16}
K.~J. B\"{o}r\"{o}czky, P.~Heged\H{u}s, and G.~Zhu.
\newblock On the discrete logarithmic {M}inkowski problem.
\newblock {\em Int. Math. Res. Not. IMRN}, (6):1807--1838, 2016.

\bibitem{BLYZ13}
K.~J. B\"{o}r\"{o}czky, E.~Lutwak, D.~Yang, and G.~Zhang.
\newblock The logarithmic {M}inkowski problem.
\newblock {\em J. Amer. Math. Soc.}, 26(3):831--852, 2013.

\bibitem{Caffarelli90b}
L.~A. Caffarelli.
\newblock Interior {$W^{2,p}$} estimates for solutions of the
  {M}onge-{A}mp\`ere equation.
\newblock {\em Ann. of Math. (2)}, 131(1):135--150, 1990.

\bibitem{CLZ19}
S.~Chen, Q.-r. Li, and G.~Zhu.
\newblock The logarithmic {M}inkowski problem for non-symmetric measures.
\newblock {\em Trans. Amer. Math. Soc.}, 371(4):2623--2641, 2019.

\bibitem{Che06}
W.~Chen.
\newblock {$L_p$} {M}inkowski problem with not necessarily positive data.
\newblock {\em Adv. Math.}, 201(1):77--89, 2006.

\bibitem{CY76_CPAM}
S.~Y. Cheng and S.~T. Yau.
\newblock On the regularity of the solution of the {$n$}-dimensional
  {M}inkowski problem.
\newblock {\em Comm. Pure Appl. Math.}, 29(5):495--516, 1976.

\bibitem{CW06}
K.-S. Chou and X.-J. Wang.
\newblock The {$L_p$}-{M}inkowski problem and the {M}inkowski problem in
  centroaffine geometry.
\newblock {\em Adv. Math.}, 205(1):33--83, 2006.

\bibitem{CNS+15}
A.~Colesanti, K.~Nystr\"{o}m, P.~Salani, J.~Xiao, D.~Yang, and G.~Zhang.
\newblock The {H}adamard variational formula and the {M}inkowski problem for
  {$p$}-capacity.
\newblock {\em Adv. Math.}, 285:1511--1588, 2015.

\bibitem{FJ38_DVSMFM}
W.~Fenchel and B.~Jessen.
\newblock {\em Mengenfunktionen und konvexe K{\"o}rper}.
\newblock 1938.

\bibitem{HLYZ10}
C.~Haberl, E.~Lutwak, D.~Yang, and G.~Zhang.
\newblock The even {O}rlicz {M}inkowski problem.
\newblock {\em Adv. Math.}, 224(6):2485--2510, 2010.

\bibitem{HLYZ05}
D.~Hug, E.~Lutwak, D.~Yang, and G.~Zhang.
\newblock On the {$L_p$} {M}inkowski problem for polytopes.
\newblock {\em Discrete Comput. Geom.}, 33(4):699--715, 2005.

\bibitem{Jerison96}
D.~Jerison.
\newblock A {M}inkowski problem for electrostatic capacity.
\newblock {\em Acta Math.}, 176(1):1--47, 1996.

\bibitem{Jia10}
M.-Y. Jiang.
\newblock Remarks on the 2-dimensional {$L_p$}-{M}inkowski problem.
\newblock {\em Adv. Nonlinear Stud.}, 10(2):297--313, 2010.

\bibitem{LX13}
J.~Lu and X.-J. Wang.
\newblock Rotationally symmetric solutions to the {$L_p$}-{M}inkowski problem.
\newblock {\em J. Differential Equations}, 254(3):983--1005, 2013.

\bibitem{Lutwak93}
E.~Lutwak.
\newblock The {B}runn-{M}inkowski-{F}irey theory. {I}. {M}ixed volumes and the
  {M}inkowski problem.
\newblock {\em J. Differential Geom.}, 38(1):131--150, 1993.

\bibitem{Lutwak93_JDG}
E.~Lutwak.
\newblock The {B}runn-{M}inkowski-{F}irey theory. {I}. {M}ixed volumes and the
  {M}inkowski problem.
\newblock {\em J. Differential Geom.}, 38(1):131--150, 1993.

\bibitem{LO95_JDG}
E.~Lutwak and V.~Oliker.
\newblock On the regularity of solutions to a generalization of the {M}inkowski
  problem.
\newblock {\em J. Differential Geom.}, 41(1):227--246, 1995.

\bibitem{LYZ04_TAMS}
E.~Lutwak, D.~Yang, and G.~Zhang.
\newblock On the {$L_p$}-{M}inkowski problem.
\newblock {\em Trans. Amer. Math. Soc.}, 356(11):4359--4370, 2004.

\bibitem{Minkowski97}
H.~Minkowski.
\newblock Allgemeine lehrsatze uber die konvexen polyeder.
\newblock {\em Nachr. Ges. Wiss. Gottingen, Math.-Phys. KL}, pages 198--219,
  1897.

\bibitem{Nirenberg53_CPAM}
L.~Nirenberg.
\newblock The {W}eyl and {M}inkowski problems in differential geometry in the
  large.
\newblock {\em Comm. Pure Appl. Math.}, 6:337--394, 1953.

\bibitem{Pogorelov78_book}
A.~V.~y. Pogorelov.
\newblock {\em The {M}inkowski multidimensional problem}.
\newblock Scripta Series in Mathematics. V. H. Winston \& Sons, Washington,
  D.C.; Halsted Press [John Wiley \& Sons], New York-Toronto-London, 1978.
\newblock Translated from the Russian by Vladimir Oliker, Introduction by Louis
  Nirenberg.

\bibitem{Schneider14_book}
R.~Schneider.
\newblock {\em Convex bodies: the {B}runn-{M}inkowski theory}, volume 151 of
  {\em Encyclopedia of Mathematics and its Applications}.
\newblock Cambridge University Press, Cambridge, expanded edition, 2014.

\bibitem{Stancu02_AM}
A.~Stancu.
\newblock The discrete planar {$L_0$}-{M}inkowski problem.
\newblock {\em Adv. Math.}, 167(1):160--174, 2002.

\bibitem{Xia17}
J.~Xiao.
\newblock {$P$}-capacity vs surface-area.
\newblock {\em Adv. Math.}, 308:1318--1336, 2017.

\bibitem{XXX19}
G.~Xiong, J.~Xiong, and L.~Xu.
\newblock The {$L_p$} capacitary {M}inkowski problem for polytopes.
\newblock {\em J. Funct. Anal.}, 277(9):3131--3155, 2019.

\bibitem{Zhu14}
G.~Zhu.
\newblock The logarithmic {M}inkowski problem for polytopes.
\newblock {\em Adv. Math.}, 262:909--931, 2014.

\bibitem{Zhu15_JFA}
G.~Zhu.
\newblock The {$L_p$} {M}inkowski problem for polytopes for {$0<p<1$}.
\newblock {\em J. Funct. Anal.}, 269(4):1070--1094, 2015.

\bibitem{ZX20}
D.~Zou and G.~Xiong.
\newblock The {$L_p$} {M}inkowski problem for the electrostatic
  {$\mathfrak{p}$}-capacity.
\newblock {\em J. Differential Geom.}, 116(3):555--596, 2020.

\end{thebibliography}

\end{document}